\theoremstyle{plain}
\newtheorem{theorem}{Theorem}
\newtheorem{remark}{Remark}
\newtheorem{corollary}{Corollary}
\theoremstyle{definition}
\newcommand{\bell}{\textup{B}}
\subjclass[2010]{05A17, 11P81}
\keywords{Integer partitions, Partial Bell polynomials, Pentagonal numbers, Fa\`{a} di Bruno's formula, Ramanujan's tau function} 
\author{Sumit Kumar Jha}
\address{International Institute of Information Technology\\
Hyderabad-500032, India}
\email{kumarjha.sumit@research.iiit.ac.in} 
\begin{document}
	
\title[A formula for the number of partitions of $n$]
 {A formula for the number of partitions of $n$ in terms of the partial Bell polynomials}

\begin{abstract}
We derive a formula for $p(n)$ (the number of partitions of $n$) in terms of the partial Bell polynomials using Fa\`{a} di Bruno's formula and Euler's pentagonal number theorem.
\end{abstract}

\maketitle

\section{Main result}
Recall the classical partition function, denoted by $p(n)$, gives the number of ways of writing the integer $n$ as a sum of positive integers, where the order of summands is not considered significant. For example, $p(4)=5$, since there are $5$ ways to represent 4 as
sum of positive integers, namely, $4= 3+1=2+2=2+1+1=1+1+1+1$.\par 
We also recall another classical statistic, the {\it $(n,k)$th partial Bell polynomial} in the variables $x_{1},x_{2},\dotsc,x_{n-k+1}$, denoted by $B_{n,k}\equiv\bell_{n,k}(x_1,x_2,\dotsc,x_{n-k+1})$ (\cite[p. 134]{Comtet}, \cite[Ch. 12]{Andrews}), defined by
\begin{equation*}
\bell_{n,k}(x_1,x_2,\dotsc,x_{n-k+1})=\sum_{\substack{1\le i\le n,\ell_i\in\mathbb{N}\\ \sum_{i=1}^ni\ell_i=n\\ \sum_{i=1}^n\ell_i=k}}\frac{n!}{\prod_{i=1}^{n-k+1}\ell_i!} \prod_{i=1}^{n-k+1}\Bigl(\frac{x_i}{i!}\Bigr)^{\ell_i}.
\end{equation*}
Cvijovi\'{c} \cite{Bell} gives the following formula for calculating these polynomials
\begin{align}
\label{explicit}
 B_{n, k + 1}  =  & \frac{1}{(k+1)!} \underbrace{\sum_{\alpha_1\,=  k}^{n-1} \, \sum_{\alpha_2\,=  k-1}^{\alpha_1-1} \cdots  \sum_{\alpha_k\, = 1}^{\alpha_{k-1}-1} }_{k }
\overbrace{\binom{n}{\alpha_1}  \binom{\alpha_1}{\alpha_2} \cdots  \binom{\alpha_{k-1}}{\alpha_k}}^{k}\nonumber \times \cdots
\\
&\times x_{n-\alpha_1} x_{\alpha_1 -\alpha_2} \cdots x_{\alpha_{k-1}-\alpha_k} x_{\alpha_k} \qquad(n\geq k+1, k\,=1, 2, \ldots)
\end{align}
We prove the following here.
\begin{theorem}
We have
\begin{equation}
\label{explicit2}
p(n)=\frac{1}{n!}\sum_{k=0}^{n}(-1)^{k}\, k!\, B_{n,k}(\lambda_{1},\lambda_{2},\cdots,\lambda_{n-k+1})
\end{equation}
where
\begin{equation}
\normalfont
\label{lamb}
\lambda_{m}=
\begin{cases}
(-1)^{\frac{1+\sqrt{1+24m}}{6}}\, m! & \text{if $\frac{1+\sqrt{1+24m}}{6}\in \mathbb{Z}$,}\\
(-1)^{\frac{1-\sqrt{1+24m}}{6}}\, m! & \text{if $\frac{1-\sqrt{1+24m}}{6}\in \mathbb{Z}$,}\\
0 & \text{otherwise.}
\end{cases}
\end{equation}
\end{theorem}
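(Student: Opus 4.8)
The plan is to recognize the generating-function identity for $p(n)$ as a composition of two explicit analytic functions and then extract its $n$-th Taylor coefficient with Fa\`{a} di Bruno's formula, whose right-hand side is already a weighted sum of partial Bell polynomials. First I would set $\phi(q)=\prod_{j=1}^{\infty}(1-q^{j})$ and $P(q)=\sum_{n\ge 0}p(n)\,q^{n}$, both analytic on $|q|<1$, and invoke the fundamental identity $P(q)\,\phi(q)=1$. Euler's pentagonal number theorem then gives
\[
\phi(q)=\sum_{k=-\infty}^{\infty}(-1)^{k}\,q^{k(3k-1)/2}=\sum_{m\ge 0}a_{m}\,q^{m},
\]
so that $a_{m}=(-1)^{k}$ whenever $m=k(3k-1)/2$ is a generalized pentagonal number, and $a_{m}=0$ otherwise.

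The central bookkeeping step, which I expect to be the main obstacle, is to verify that the piecewise quantity in \eqref{lamb} is exactly $\lambda_{m}=\phi^{(m)}(0)=a_{m}\,m!$. Solving $k(3k-1)/2=m$ for $k$ yields $k=\bigl(1\pm\sqrt{1+24m}\bigr)/6$; since the two roots sum to $1/3$, at most one of them can be an integer, so the two nonzero branches of the definition are mutually exclusive and together pick out precisely the generalized pentagonal indices $m$. In each branch the integer value is exactly the pentagonal index $k$, whence $(-1)^{(1\pm\sqrt{1+24m})/6}=(-1)^{k}=a_{m}$, giving $\lambda_{m}=a_{m}\,m!$ as required. (The value $m=0$ never enters \eqref{explicit2}, since the Bell polynomials take arguments $x_{1},\dots,x_{n-k+1}$ only.)

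Next I would write $P$ as the composite $P(q)=f\bigl(g(q)\bigr)$ with $g(q)=\phi(q)-1$ and $f(t)=1/(1+t)$, legitimate because $\phi(0)=1\neq 0$ makes $f$ analytic at $g(0)=0$. Here $g(0)=0$, $g^{(j)}(0)=\phi^{(j)}(0)=\lambda_{j}$ for $j\ge 1$, and $f^{(k)}(0)=(-1)^{k}\,k!$. Fa\`{a} di Bruno's formula then produces
\[
P^{(n)}(0)=\sum_{k=0}^{n}f^{(k)}\bigl(g(0)\bigr)\,B_{n,k}\bigl(g'(0),\dots,g^{(n-k+1)}(0)\bigr)=\sum_{k=0}^{n}(-1)^{k}\,k!\,B_{n,k}(\lambda_{1},\dots,\lambda_{n-k+1}).
\]
Since $P^{(n)}(0)=n!\,p(n)$, dividing by $n!$ yields \eqref{explicit2}. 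Apart from the $\lambda_{m}$ identification, the only point needing care is the legitimacy of the composition; one may sidestep any convergence concern entirely by running the same argument at the level of formal power series, where Fa\`{a} di Bruno is an identity of coefficients and $\phi(0)=1$ guarantees the formal inverse exists.
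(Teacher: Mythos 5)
Your proposal is correct and follows essentially the same route as the paper: apply Fa\`{a} di Bruno's formula to the reciprocal of the Euler product and read off the derivatives at $0$ via the pentagonal number theorem. Your recentering ($g=\phi-1$, $f(t)=1/(1+t)$) is only a cosmetic variant of the paper's choice $f(q)=1/q$ evaluated at $E(0)=1$, since $B_{n,k}$ only sees derivatives of order at least one; your extra care in checking that the two branches of \eqref{lamb} are mutually exclusive is a welcome detail the paper leaves implicit.
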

\begin{proof}
We begin by the following generating function \cite[Equation 22.13]{Fine}
\begin{equation}
\sum_{n\geq 0}p(n)q^{n}=\prod_{j=1}^{\infty}\frac{1}{1-q^{j}}.
\end{equation}
We recall the Euler's pentagonal number theorem \cite[Equation 7.8]{Fine}
\begin{align}
\label{pent}
E(q)&:=\prod_{j=1}^{\infty}(1-q^{j})=\sum_{n=-\infty}^{\infty}(-1)^{n}q^{\frac{3n^{2}+n}{2}}\\
\nonumber &=1-q-q^2+q^5+q^7-q^{12}-q^{15}+q^{22}+q^{26}-\cdots.
\end{align}
Let $f(q)=1/q$. Using Fa\`{a} di Bruno's formula (\cite[p. 137]{Comtet}, \cite[Ch. 12]{Andrews}) we have
\begin{equation}
\label{faa}
{d^n \over dq^n} f(E(q)) = \sum_{k=0}^n f^{(k)}(E(q))\cdot B_{n,k}\left(E'(q),E''(q),\dots,E^{(n-k+1)}(q)\right).
\end{equation}
Since $f^{(k)}(q)=\frac{(-1)^{k}\,k!}{q^{k+1}}$ and $E(0)=1$, letting $q\rightarrow 0$ in the above equation gives
$$
p(n)\, n! = \sum_{k=0}^n (-1)^{k}\, k!\, B_{n,k}\left(E'(0),E''(0),\dots,E^{(n-k+1)}(0)\right).
$$
Then Euler's pentagonal number theorem \eqref{pent} gives us
$$
E^{(m)}(0)=\lambda_{m}
$$
where $\lambda_{m}$ is as defined in \eqref{lamb}.
\end{proof}
Combining equations \eqref{explicit} and \eqref{explicit2} we can conclude that
$$
p(n)=-\theta_{n}+\sum_{k=1}^{n-1}(-1)^{k-1} \underbrace{\sum_{\alpha_1\,=  k}^{n-1} \, \sum_{\alpha_2\,=  k-1}^{\alpha_1-1} \cdots  \sum_{\alpha_k\, = 1}^{\alpha_{k-1}-1}}_{k}\theta_{n-\alpha_1} \theta_{\alpha_1 -\alpha_2} \cdots \theta_{\alpha_{k-1}-\alpha_k} \theta_{\alpha_k}
$$
where
\begin{equation*}
\theta_{m}=
\begin{cases}
(-1)^{\frac{1+\sqrt{1+24m}}{6}} & \text{if $\frac{1+\sqrt{1+24m}}{6}\in \mathbb{Z},$}\\
(-1)^{\frac{1-\sqrt{1+24m}}{6}} & \text{if $\frac{1-\sqrt{1+24m}}{6}\in \mathbb{Z},$}\\
0 & \text{otherwise.}
\end{cases}
\end{equation*}
\begin{corollary}
Let $E(q)^{r}:=\prod_{j=1}^{\infty}(1-q^{j})^{r}=\sum_{n=0}^{\infty}p_{r}(n)q^{n}$ with $p_{r}(0)=1$ (see \cite{Atkin}). Then
$$
p(n)=\sum_{r=0}^{n}(-1)^{r}\,\binom{n+1}{r+1}\,p_{r}(n),
$$
where by the virtue of the Fa\`{a} di Bruno's formula \eqref{faa} with $f(q)=q^{l}$ we have
\begin{align*}
p_{l}(n)&=\frac{1}{n!}\sum_{k=0}^{l}\binom{l}{k}\,k!\, B_{n,k}(\lambda_{1},\ldots ,\lambda_{n-k+1}).
\end{align*}
\end{corollary}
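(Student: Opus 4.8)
The plan is to recover the partition generating function $1/E(q)=\sum_{n}p(n)q^{n}$ from the nonnegative powers $E(q)^{r}$ by a geometric-series expansion. Since $E(0)=1$, the power series $1-E(q)$ has vanishing constant term and is therefore divisible by $q$. Writing
\[
\frac{1}{E(q)}=\frac{1}{1-\bigl(1-E(q)\bigr)}=\sum_{m=0}^{\infty}\bigl(1-E(q)\bigr)^{m},
\]
I would note that $\bigl(1-E(q)\bigr)^{m}=O(q^{m})$, so only the terms with $m\le n$ contribute to $[q^{n}]$. Hence
\[
p(n)=[q^{n}]\frac{1}{E(q)}=\sum_{m=0}^{n}[q^{n}]\bigl(1-E(q)\bigr)^{m}.
\]

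Next I would expand each summand by the binomial theorem, $\bigl(1-E(q)\bigr)^{m}=\sum_{r=0}^{m}\binom{m}{r}(-1)^{r}E(q)^{r}$, and read off $[q^{n}]E(q)^{r}=p_{r}(n)$ to obtain
\[
p(n)=\sum_{m=0}^{n}\sum_{r=0}^{m}(-1)^{r}\binom{m}{r}p_{r}(n).
\]
Interchanging the order of summation and applying the hockey-stick identity $\sum_{m=r}^{n}\binom{m}{r}=\binom{n+1}{r+1}$ then collapses the inner sum and yields
\[
p(n)=\sum_{r=0}^{n}(-1)^{r}\binom{n+1}{r+1}\,p_{r}(n),
\]
which is the asserted formula.

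For the accompanying expression for $p_{l}(n)$, I would simply rerun Fa\`{a} di Bruno's formula \eqref{faa} with $f(q)=q^{l}$ instead of $f(q)=1/q$. Here $f^{(k)}(q)=\frac{l!}{(l-k)!}q^{l-k}$ for $k\le l$ and $f^{(k)}\equiv 0$ for $k>l$, so letting $q\to 0$ and using $E(0)=1$ gives $f^{(k)}(E(0))=\frac{l!}{(l-k)!}=\binom{l}{k}k!$. Since $\frac{d^{n}}{dq^{n}}E(q)^{l}\big|_{q=0}=n!\,p_{l}(n)$ and $E^{(m)}(0)=\lambda_{m}$ by the Theorem, this reproduces
\[
p_{l}(n)=\frac{1}{n!}\sum_{k=0}^{l}\binom{l}{k}\,k!\,B_{n,k}(\lambda_{1},\ldots,\lambda_{n-k+1}).
\]

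The calculations are elementary; the only point requiring care is the truncation of the geometric series, namely the observation that $1-E(q)$ has zero constant term so that $\bigl(1-E(q)\bigr)^{m}$ cannot affect $[q^{n}]$ once $m>n$, which legitimizes replacing the infinite sum by a finite one. The remaining ingredients --- the binomial theorem, the interchange of the two finite sums, and the hockey-stick identity --- are routine, so I expect no serious obstacle.
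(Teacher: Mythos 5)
Your proof is correct, but it reaches the main identity by a genuinely different route than the paper. The paper treats the identity as an actual corollary of its Theorem: it invokes the generating function $\sum_{n\ge k}B_{n,k}(\lambda_1,\ldots,\lambda_{n-k+1})\frac{q^n}{n!}=\frac{1}{k!}(E(q)-1)^k$, expands $(E(q)-1)^k$ binomially to get $B_{n,k}(\lambda_1,\ldots,\lambda_{n-k+1})=\frac{n!}{k!}\sum_{r=0}^{k}(-1)^{k-r}\binom{k}{r}p_r(n)$, and substitutes this into \eqref{explicit2}; the result is the same double sum $\sum_{k=0}^{n}\sum_{r=0}^{k}(-1)^r\binom{k}{r}p_r(n)$ that you obtain, after which both arguments finish identically with the interchange of summation and the hockey-stick identity. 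You instead bypass the Bell polynomials entirely by writing $\frac{1}{E(q)}=\sum_{m\ge 0}(1-E(q))^m$ and truncating at $m=n$ because $1-E(q)$ has no constant term. The two derivations are secretly the same computation, since $\frac{(-1)^k k!}{n!}B_{n,k}(\lambda_1,\ldots,\lambda_{n-k+1})=[q^n](1-E(q))^k$; but yours is more elementary and self-contained (it never uses the Theorem, so strictly speaking it proves the identity independently rather than as a corollary), while the paper's version exhibits the consistency of the identity with formula \eqref{explicit2} and with the Bell polynomial machinery that is the point of the note. Your verification of the expression for $p_l(n)$ via Fa\`{a} di Bruno's formula with $f(q)=q^l$ matches what the paper asserts (the paper states it without detail), and your computation $f^{(k)}(1)=\binom{l}{k}k!$ is exactly the justification needed.
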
 
\begin{proof}
We start with the generating function for the partial Bell polynomials \cite[Equation (3a') on p. 133]{Comtet} as follows
\begin{align*}
{\displaystyle \sum _{n=k}^{\infty }B_{n,k}(\lambda_{1},\ldots ,\lambda_{n-k+1}){\frac {q^{n}}{n!}}}
&= {\frac {1}{k!}}\left(\sum _{j=1}^{\infty }\lambda_{j}{\frac {q^{j}}{j!}}\right)^{k} \\
&=\frac{1}{k!}(E(q)-1)^{k}\\
&=\frac{1}{k!}\sum_{r=0}^{k}(-1)^{k-r}\binom{k}{r}E(q)^{r}\\
&=\frac{1}{k!}\sum_{r=0}^{k}(-1)^{k-r}\binom{k}{r}\sum_{n=0}^{\infty}p_{r}(n)q^{n}
\end{align*}
to conclude that
$$
B_{n,k}(\lambda_{1},\ldots ,\lambda_{n-k+1})=\frac{n!}{k!}\sum_{r=0}^{k}(-1)^{k-r}\binom{k}{r}p_{r}(n).
$$
The above equation, together with the formula \eqref{explicit2}, gives us
\begin{align*}
p(n)&=\sum_{k=0}^{n}\sum_{r=0}^{k}(-1)^{r}\, \binom{k}{r}\,p_{r}(n)\\
&=\sum_{r=0}^{n}(-1)^{r}p_{r}(n)\sum_{k=r}^{n}\binom{k}{r}\\
&=\sum_{r=0}^{n}(-1)^{r}\,\binom{n+1}{r+1}\,p_{r}(n).
\end{align*}
\end{proof}
\begin{remark}
Note that similar ideas are used in \cite{Ono-etc} with relation to partition zeta functions.
\end{remark}

\end{document}